\newtheorem{theorem}{Theorem}
\newtheorem{definition}[theorem]{Definition}
\newtheorem{lemma}[theorem]{Lemma}
\newenvironment{proof}[1][Proof]{\textbf{#1.} }{\ \rule{0.5em}{0.5em}}
\begin{document}

\begin{frontmatter}



\title{Controllability of Neutral Stochastic Functional Integro-Differential Equations Driven by Fractional
  Brownian Motion with Hurst Parameter Lesser than 1/2}
\author{Brahim Boufoussi}
\ead{boufoussi@uca.ac.ma}
\author{Soufiane Mouchtabih\corref{1}}
\ead{soufiane.mouchtabih@gmail.com}

\address{LIBMA Laboratory, Department of Mathematics, Faculty of Sciences Semlalia, Cadi Ayyad University, 2390 Marrakesh, Morocco}
%
\cortext[1]{Corresponding author.}
\begin{abstract}
In this article we investigate the controllability for neutral stochastic functional integro-differential equations with finite delay, driven by a  fractional  Brownian motion with Hurst parameter lesser than  $1/2$  in a Hilbert space. We employ the theory of resolvent operators developed by \cite{Grimmer} combined with the Banach fixed point theorem to establish sufficient conditions to prove the desired result.
\end{abstract}

\begin{keyword}
 Resolvent operator; $C_0$-semigroup; Mild solution; Fractional Brownian motion; Wiener integral; Controllability.



\MSC   60H15 \sep 60G15 \sep  93E03\sep 93B05.

\end{keyword}

\end{frontmatter}



\section{Introduction}

The theory of controllability has been widely examined by many researchers due to various applications in the industry, biology and physics... It plays a vital role in both deterministic and stochastic control systems. In the literature, there are many different notions of controllability, both for linear and non-linear dynamical systems. Controllability of the deterministic and stochastic dynamical control systems in infnite-dimensional spaces is well-developed using different kind of approaches. It should be mentioned that there are few works in controllability problems for different kind of systems described by differential equations driven by fractional Brownian motion in Hilbert space with Hurst parameter $H\in (\frac{1}{2},1)$. For example, \cite{Ahmed} discussed the controllability of impulsive neutral functional SDEs, \cite{Lakhel} investigated the controllability result for neutral stochastic delay functional integro-differential equations, \cite{tam} studied the approximate controllability of a class of fractional stochastic differential equations driven by mixed fractional Brownian motion in Hilbert space. We would like to point out that there is no work reported yet on the controllability of neutral stochastic delay integro-differential equations perturbed by a fractional Brownian motion with Hurst parameter lesser than $\frac{1}{2}$.\\
After this brief outline on the literature, we will now describe precisely the system
investigated in this paper. Motivated by these works, we consider the following neutral stochastic functional integro-differential equation with finite delay:
\begin{eqnarray}\label{eq1}
\left\{
\begin{array}{r c l}
d[x(t)+g(t,x(t-r(t)))]=\big[A[x(t)+g(t,x(t-r(t)))]+Lu(t)\big]dt&&\\
+\left[\int_0^tB(t -s)\left[x(s)+g(s,x(s-r(s)))\right]ds+f(t,x(t-\rho(t)))\right]dt&& \\
+\sigma(t)dB^H(t),\qquad\qquad\qquad\;0\leq t \leq T,&&\\
x(t)=\varphi (t), \; -\tau \leq t \leq 0\,,
\end{array}
\right.
\end{eqnarray}
where $A:D(A)\subset X \rightarrow X$  is a closed linear operator, for all $t\geq 0,\, B(t)$ is a closed linear operator with domain $ D(B(t))\supset D(A)$. The control function $u(.)$ takes values in $L^2([0,T],U)$, the Hilbert space of admissible control functions for a separable Hilbert space $U$. $L$ is a bounded linear operator form $U$ into $X$. $B^H$ is a fractional Brownian motion with Hurst parameter $H<1/2$ on a real and separable Hilbert space $Y$. $r,\;  \rho:[0,+\infty)\rightarrow [0,\tau]\; (\tau  >0)$ are continuous and $f,g:[0,+\infty)\times X \rightarrow X,\;\; \sigma:[0,+\infty] \rightarrow \mathcal{L}_2^0(Y,X)$ are appropriate functions. Here $\mathcal{L}_2^0(Y,X)$  denotes the
space of all $Q$-Hilbert-Schmidt operators from $Y$ into $X$ (see section 2  below). We mention that a variant of this  equation without the term involving the operator $B(t)$ has been studied in  \cite{boufoussi} by using the  theory  of analytic semi-groups and  fractional powers  associated to its generator.

The outline of this paper is as follows. In Section 2 we introduce some notations, concepts, and basic results about fractional Brownian motion, Wiener integral over Hilbert spaces and we recall some preliminary results about resolvent operators. Section 3 investigates the controllability of the system $(\ref{eq1})$ by using Banach fixed point theorem. An illustrative example is given in the last Section.
\section{Preliminaries}

In this section we collect some notions and conceptions on Wiener integrals with respect to an infinite dimensional fractional Brownian
 and we recall some basic results about resolvent operators which will be used throughout the whole of this paper.\\
Let $(\Omega,\mathcal{F}, \mathbb{P})$ be a complete probability space.
Consider  a time interval $[0,T]$ with arbitrary fixed horizon $T$ and let $\{\beta^H(t) , t \in [0, T ]\}$
the one-dimensional  fractional Brownian motion with
Hurst parameter $H\in(0,1/2)$. This means
by definition that $\beta^H$ is a centred Gaussian process with covariance function:
$$ R_H(s, t) =\frac{1}{2}(t^{2H} + s^{2H}-|t-s|^{2H}).$$
 Moreover $\beta^H$ has the following Wiener
integral representation:
\begin{equation}\label{rep}
\beta^H(t) =\int_0^tK_H(t,s)d\beta(s)\,,
 \end{equation}
where $\beta = \{\beta(t) :\; t\in [0,T]\}$ is a Wiener process, and $K_H(t; s)$ is a square integrable kernel given by
(see \cite{nualart})
\begin{equation}\label{K}
K_H(t, s )=c_H\left[(\frac{t}{s})^{H-1/2}(t-s)^{H-1/2}-(H-\frac{1}{2})s^{1/2-H}\int_s^tu^{H-3/2} (u-s)^{H-1/2}du\right]
\end{equation}
for $t>s$,
where $c_H=\sqrt{\frac{2H}{(1-2H)\beta (1-2H,H+\frac{1}{2})}}$
and $\beta(,)$ is the Beta function.
We put $K_H(t, s ) =0$ if $t\leq s$.
And from $(\ref{K})$ it follows that:
\begin{equation}\label{K1}
| K(t,s)|\leq 2c_H\left((t-s)^{H-\frac{1}{2}}+
s^{H-\frac{1}{2}}\right)\,.
 \end{equation}
In the sequel we will use the following inequality :
\begin{equation}\label{K2}
|\frac{\partial K}{\partial t}(t,s)|\leq
c_H(\frac{1}{2}-H)(t-s)^{H-\frac{3}{2}}\,.
\end{equation}
We denote by $\mathcal{H}$ the
closure of set of indicator functions
$\{1_{[0;t]},  t\in[0,T]\}$ with respect to the scalar product
$\langle 1_{[0,t]},1_{[0,s]}\rangle _{\mathcal{H}}=R_H(t , s).$\\
The mapping $1_{[0,t]}\rightarrow \beta^H(t)$
 can be extended to an isometry between $\mathcal{H}$
and the first  Wiener chaos and
we will denote by $\beta^H(\varphi)$ the image of $\varphi$ by
the previous isometry.

It's known that $\mathcal{H}= I_{T^-}^{1/2-H}(L^2)$ and
$\mathcal{C}^\gamma([0,T])\subseteq \mathcal{H} $ if
$ \gamma > 1/2-H$
where $\mathcal{C}^\gamma([0,T])$ is the space of
$\gamma$-H\"{o}lder continuous functions and
$I_{T^-}^{\alpha}(L^2)$ is the image of $L^2([0,T])$ by the operator
$I_{T^-}^{\alpha}$ defined by:
 $$I_{T^-}^{\alpha}f(x)=\frac{1}{\Gamma (\alpha)}
\int_x^T (y-x)^{\alpha -1}f(y)dy\,.$$
Let us consider the operator $K_{H,T}^*$ from $\mathcal{H}$ to
$L^2([0,T])$ defined by
\begin{equation}\label{KH}
(K_{H,T}^*\varphi)(s)=K(T,s)\varphi(s)+\int_s^T(\varphi(r)-\varphi(s))
\frac{\partial K}{\partial r}(r,s)dr\,.
\end{equation}
We refer to \cite{nualart} for the proof of the fact that
$K_{H,T}^{*}$ is an isometry between $\mathcal{H}$ and $L^2([0,T])$.
Moreover for any $\varphi \in \mathcal{H}$, we have
$$\int_0^T {\varphi(s) d\beta^{H}(s)}:=\beta^H(\varphi)=
\int_0^T(K_{H,T}^*\varphi)(t)d\beta(t)\,.$$
We also have for  $ 0\leq t\leq T $
$$\int_0^t {\varphi(s) d\beta^{H}(s)}:=\int_0^T(K_{H,T}^*\, \varphi\, 1_{[0,t]})(s)d\beta(s)=\int_0^t(K_{H,t}^*\varphi)(s)d\beta(s)
\,,$$
where $K_{H,t}^*$ is defined in the same way as in (\ref{KH}) with
$t$ instead of $T$. In the next we will use the notation
$K_{H}^*$ without specifying the parameter $t\in[0,T]$.

Let $X$ and $Y$ be two real, separable Hilbert spaces and
let $\mathcal{L}(Y,X)$ be the space of bounded linear operator
from $Y$ to $X$. For the sake of convenience, we shall use the
same notation to denote the norms in $X,Y$ and $\mathcal{L}(Y,X)$.
Let $Q\in \mathcal{L}(Y,Y)$ be an operator defined by $Qe_n=\lambda_n e_n$
with finite trace $trQ=\sum_{n=1}^{\infty}\lambda_n<\infty$. where $\lambda_n \geq 0 \; (n=1,2...)$ are non-negative real numbers and $\{e_n\}\;(n=1,2...)$ is a complete orthonormal basis in $Y$.
 We define the infinite dimensional fBm on $Y$ with covariance
 $Q$ as
 $$B^H(t)=B^H_Q(t)=\sum_{n=1}^{\infty}\sqrt{\lambda_n}e_n\beta_n^H(t)\,,$$
 where $\beta_n^H$ are real, independent fBm's. This process is a $Y$-valuad Gaussian, it starts from $0$, has zero mean and covariance:
 $$E\langle B^H(t),x\rangle\langle B^H(s),y\rangle=R(s,t)\langle Q(x),y\rangle \;\; \mbox{for all}\; x,y \in Y \;\mbox {and}\;  t,s \in [0,T]$$
In order to define Wiener integrals with respect to the $Q$-fBm,
we introduce the space $\mathcal{L}_2^0:=\mathcal{L}_2^0(Y,X)$
of all $Q$-Hilbert-Schmidt operators $\psi:Y\rightarrow X$.
We recall that $\psi \in \mathcal{L}(Y,X)$ is called a $Q$-Hilbert-Schmidt operator, if
$$  \|\psi\|_{\mathcal{L}_2^0}^2:=\sum_{n=1}^{\infty}\|\sqrt{\lambda_n}\psi e_n\|^2 <\infty\, ,$$
and that the space $\mathcal{L}_2^0$ equipped with the inner
product
$\langle \varphi,\psi \rangle_{\mathcal{L}_2^0}=\sum_{n=1}^{\infty}\langle \varphi e_n,\psi e_n\rangle$ is a separable Hilbert space.

Now, let $\{\phi(s);\,s\in [0,T]\}$ be a function with values in $\mathcal{L}_2^0(Y,X)$, The Wiener integral of $\phi$ with respect to $B^H$ is defined by
\begin{equation}\label{int}
\int_0^t\phi(s)dB^H(s)=\sum_{n=1}^{\infty}\int_0^t \sqrt{\lambda_n}\phi(s)e_nd\beta^H_n(s)=\sum_{n=1}^{\infty}\int_0^t \sqrt{\lambda_n}(K_H^*(\phi e_n)(s)d\beta_n(s)
\end{equation}
where $\beta_n$ is the standard Brownian motion used to  present $\beta_n^H$ as in $(\ref{rep})$, and the above sum is finite when $\displaystyle\sum_n\lambda_n \| K_H^*(\phi e_n)\|^2 <\infty$  .\\
Now we turn to state some notations and basic facts about  the theory of  resolvent operators needed in the sequel.  For additional details on resolvent operators, we refer  to  \cite{Grimmer} and \cite{pruss}.\\

Let $A:D(A)\subset X \rightarrow X$   be a closed linear operator and  for all $t\geq 0,\, B(t)$  a closed linear operator with domain $ D(B(t))\supset D(A)$.
Let us denote by $Z$ the Banach space $D(A)$, the domain of operator $A$, equipped with the graph norm
$$\|y\|_Z :=\|Ay\|+\|y\| \;\;\mbox{for}\;\; y\in Z.$$
Let us  consider the following Cauchy problem
\begin{eqnarray}\label{cauchy}
\left\{
\begin{array}{r c l}
v'(t) &=& Av(t)+\int_0^tB(t -s)v(s)ds \;\; \mbox{for}\;\; t\geq 0,\\
v(0) &=& v_0 \in X.
\end{array}
\right.
\end{eqnarray}
\begin{definition}(\cite{Grimmer}) A resolvent operator of the Eq.$(\ref{cauchy})$ is a bounded linear operator valued function $R(t)\in \mathcal{L}(X)$ for $t\geq 0$, satisfying the following properties:
\begin{itemize}
\item  [(i)] $ R(0) = I$ and $\|R(t)\|\leq Ne^{\beta t}$  for some constants $N$ and $\beta$.
\item  [(ii)] For each $x\in X$, $R(t)x$ is strongly continuous for $t\geq 0$.
\item   [(iii)] For $x \in Z$, $R(.)x\in \mathcal{C}^1([0,+\infty);X)\cap \mathcal{C}([0,+\infty);Z)$ and
$$R'(t)x = AR(t)x +\int_0^tB(t -s)R(s)xds= R(t)Ax+\int_0^tR(t -s)B(s)xds, \;\;\mbox{for}\;\; t\geq 0.$$
\end{itemize}
\end{definition}
The resolvent operator plays an important role to study the existence of solutions and to establish a variation of constants formula for non-linear systems. For this reason, to assure the existence of the resolvent operator, we make the following hypotheses:
\begin{itemize}
\item [$(\mathcal{H}.1)$] $A$ is the infinitesimal generator of a $C_0$-semigroup $(T(t))_{t\geq0}$ on $X$.
 \item [$(\mathcal{H}.2)$] For all $t\geq 0$, $B(t)$ is a continuous linear operator from $(Z,\|.\|_Z)$ into $(X,\|.\|_X)$. Moreover, there is a locally integrable function  $c :\mathbb{R}^+\rightarrow  \mathbb{R}^+$ such that for any $y\in Z, t\mapsto B(t)y$ belongs to $W^{1,1}([0,+\infty), X)$ and	
     			$$\|\frac{d }{dt} B(t)y\|_X \leq c(t)\|y\|_Z \;\mbox{for}\;\; y \in Z\; \;\mbox{and}\;\; t \geq 0.$$
     \end{itemize}
     \begin{theorem}(\cite{desch}) Assume that hypotheses $(\mathcal{H}.1)$ and $(\mathcal{H}.2)$ hold.   Then  the Cauchy problem  $(\ref{cauchy})$  admits a unique resolvent operator $(R(t))_{t\geq 0}$.
     \end{theorem}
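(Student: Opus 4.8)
The plan is to follow the approach of \cite{desch} (see also \cite{Grimmer,pruss}): first build $R(\cdot)x$ for $x$ in the smaller space $Z=D(A)$ on a fixed interval $[0,T]$, and then extend the family $\{R(t)\}$ to $\mathcal{L}(X)$ (the resulting operators being consistent as $T$ varies, hence defining $R$ on all of $[0,\infty)$). For $x\in Z$ one looks for $R(\cdot)x$ as the unique solution $u\in\mathcal{C}([0,T];Z)$ of the Duhamel-type equation
\begin{equation}\label{e:duh}
u(t)=T(t)x+\int_0^tT(t-s)\Big(\int_0^sB(s-\sigma)u(\sigma)\,d\sigma\Big)ds,\qquad 0\le t\le T .
\end{equation}
The role of hypothesis $(\mathcal{H}.2)$ is that, for $u\in\mathcal{C}([0,T];Z)$, the convolution $h(s):=\int_0^sB(s-\sigma)u(\sigma)\,d\sigma$ belongs to $\mathcal{C}^1([0,T];X)$, with $h(0)=0$ and $h'(s)=B(0)u(s)+\int_0^s\big(\frac{d}{dt}B\big)(s-\sigma)\,u(\sigma)\,d\sigma$, the last integral converging in $X$ because $\|(\frac{d}{dt}B)(s-\sigma)u(\sigma)\|_X\le c(s-\sigma)\|u(\sigma)\|_Z$ and $c$ is locally integrable. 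Since $T(t)$ leaves $Z$ invariant and $\int_0^tT(t-s)h(s)\,ds\in D(A)$ whenever $h\in\mathcal{C}^1([0,T];X)$, the right-hand side of $(\ref{e:duh})$ maps $\mathcal{C}([0,T];Z)$ into itself. First I would check that it is a contraction for the weighted norm $\|u\|_\lambda=\sup_{0\le t\le T}e^{-\lambda t}\|u(t)\|_Z$ with $\lambda$ large enough (equivalently, a contraction on a short interval, followed by a step-by-step continuation). Its fixed point $u$ depends linearly on $x$ by uniqueness, and differentiating $(\ref{e:duh})$ shows that $R(t)x:=u(t)$ satisfies $R(0)x=x$ and the first identity in (iii).

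For uniqueness I would run a Gronwall argument in the graph norm. If $R_1,R_2$ are two resolvents, then for $x\in Z$ the difference $w(\cdot)=R_1(\cdot)x-R_2(\cdot)x$ lies in $\mathcal{C}([0,T];Z)\cap\mathcal{C}^1([0,T];X)$, vanishes at $t=0$, and solves $w'=Aw+B*w$. Variation of parameters gives $w(t)=\int_0^tT(t-s)(B*w)(s)\,ds$, and since $B*w\in\mathcal{C}^1([0,T];X)$ one has $Aw(t)=-(B*w)(t)+\int_0^tT(t-s)(B*w)'(s)\,ds$. Bounding $\|(B*w)(s)\|_X$ and $\|(B*w)'(s)\|_X$ by expressions of the form $\int_0^sb(s-\sigma)\|w(\sigma)\|_Z\,d\sigma$, where $b(t)=\|B(0)\|_{\mathcal{L}(Z,X)}+\int_0^tc(\sigma)\,d\sigma$ is locally bounded by $(\mathcal{H}.2)$, yields a linear Volterra inequality for $t\mapsto\|w(t)\|_Z$, whence $w\equiv0$. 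Since $D(A)$ is dense in $X$ (because $A$ generates a $C_0$-semigroup) and both resolvents satisfy the bound in (i), it follows that $R_1\equiv R_2$.

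The crux, and the step I expect to be the most delicate, is promoting the construction on $Z$ to the exponential bound $\|R(t)\|_{\mathcal{L}(X)}\le Ne^{\beta t}$ of (i): the contraction on $\mathcal{C}([0,T];Z)$ controls $\|R(t)x\|$ only in terms of $\|x\|_Z$, since $B(t)$ maps $Z$ into $X$ with a genuine loss of regularity. The idea is to integrate by parts in the convolution of $(\ref{e:duh})$: writing $G(\sigma)x=\int_0^\sigma R(\rho)x\,d\rho$, one obtains $\int_0^sB(s-\sigma)R(\sigma)x\,d\sigma=B(0)G(s)x+\int_0^s\big(\frac{d}{dt}B\big)(s-\sigma)G(\sigma)x\,d\sigma$, while the differential identity in (iii) gives $AG(\sigma)x=R(\sigma)x-x-\int_0^\sigma\big(\int_0^\rho B(\rho-\mu)R(\mu)x\,d\mu\big)d\rho$. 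Feeding these back into $(\ref{e:duh})$ produces a closed system of linear Volterra inequalities for the pair of continuous scalar functions $t\mapsto\|R(t)x\|_X$ and $t\mapsto\|\int_0^tB(t-s)R(s)x\,ds\|_X$, all of whose kernels are locally integrable; a generalized Gronwall lemma then gives $\|R(t)x\|_X\le Ne^{\beta t}\|x\|_X$ with $N,\beta$ independent of $x\in Z$. By density of $Z$ in $X$, each $R(t)$ extends uniquely to an operator in $\mathcal{L}(X)$ with the same bound, which is (i), and strong continuity (ii) follows from $(\ref{e:duh})$ on $Z$ together with an $\varepsilon/3$ argument using the uniform bound on compact intervals. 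Finally, the second representation in (iii) is deduced from the first by a standard commutation argument (equivalently, by applying the uniqueness estimate to $R(\cdot)Ax$), completing the verification that $R$ is the resolvent operator of $(\ref{cauchy})$.
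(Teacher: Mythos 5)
There is nothing in the paper to compare your argument against: Theorem~1 is quoted from \cite{desch} and the authors give no proof, so you have in effect reconstructed the argument of the cited reference rather than an argument of this paper. Your reconstruction follows the standard route and its skeleton is sound: the fixed point for the Duhamel equation in $\mathcal{C}([0,T];Z)$ works exactly as you say, because $(\mathcal{H}.2)$ makes $s\mapsto\int_0^sB(s-\sigma)u(\sigma)\,d\sigma$ a $W^{1,1}$ (indeed continuous-derivative) $X$-valued map with value $0$ at $s=0$, so the convolution with the semigroup lands back in $D(A)$ and the weighted-norm contraction closes in the graph norm; your uniqueness argument via variation of parameters plus the identity $Aw(t)=-(B*w)(t)+\int_0^tT(t-s)(B*w)'(s)\,ds$ and a Volterra--Gronwall inequality in $\|w(\cdot)\|_Z$ is correct; and you rightly identify the promotion from a $Z$-bound to an $X$-bound as the crux, with the integration by parts $\int_0^sB(s-\sigma)R(\sigma)x\,d\sigma=B(0)G(s)x+\int_0^s(\tfrac{d}{dt}B)(s-\sigma)G(\sigma)x\,d\sigma$ and the identity $AG(\sigma)x=R(\sigma)x-x-\int_0^\sigma(B*R)(\rho)x\,d\rho$ being precisely what closes the system of scalar Volterra inequalities. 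Two caveats: that last step, and the verification of the second identity in (iii), are stated as plans rather than carried out, so as written this is a correct and well-ordered outline rather than a complete proof; and with $c$ only locally integrable your Gronwall argument delivers a bound $\sup_{[0,T]}\|R(t)\|\le N_T$ on each compact interval, which is what the present paper actually uses, but obtaining the global exponential bound $\|R(t)\|\le Ne^{\beta t}$ of item (i) of the definition requires an extra (standard) argument that you do not indicate.
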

The following lemma proves that the resolvent operator $(R(t))_{t\geq 0}$ satisfies a Lipschitz condition:
\begin{lemma}\label{lem1}
Under conditions $(\mathcal{H}.1)$ and $(\mathcal{H}.2)$, we have:
$$ \|R(t)x-R(s)x\|\leq M \mid t-s\mid \| x\|_Z\;\; \mbox{for all } \;\;t,s \in [0,T]\;\;\mbox{and}\;\; x \in Z,$$
where  $M= \left(1+T\|B(0)\|+T\int_0^T c(s)ds \right)\sup_{t\in [0,T]}\|R(t)\|.$
\end{lemma}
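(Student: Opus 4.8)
The plan is to exploit the differentiability of $t\mapsto R(t)x$ on $Z$ that property (iii) of a resolvent operator provides, and to integrate the derivative. Fix $x\in Z$ and, without loss of generality, take $s\le t$ in $[0,T]$. Since $R(\cdot)x\in\mathcal{C}^1([0,+\infty);X)$, I would write
$$R(t)x-R(s)x=\int_s^t R'(\tau)x\,d\tau=\int_s^t\Bigl(R(\tau)Ax+\int_0^\tau R(\tau-\eta)B(\eta)x\,d\eta\Bigr)d\tau,$$
using the first of the two representations of $R'$ in (iii). It then suffices to bound $\|R'(\tau)x\|$ uniformly on $[0,T]$ by $M\|x\|_Z$; note $\sup_{u\in[0,T]}\|R(u)\|<\infty$ by property (i).

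Next I would estimate the two contributions separately. For the first term, $\|R(\tau)Ax\|\le\bigl(\sup_{u\in[0,T]}\|R(u)\|\bigr)\|Ax\|\le\bigl(\sup_{u\in[0,T]}\|R(u)\|\bigr)\|x\|_Z$. For the second term the key point is to control $\|B(\eta)x\|_X$ for $\eta\in[0,T]$. By hypothesis $(\mathcal{H}.2)$ the map $\eta\mapsto B(\eta)x$ lies in $W^{1,1}([0,T];X)$, hence is absolutely continuous, so
$$B(\eta)x=B(0)x+\int_0^\eta\frac{d}{ds}B(s)x\,ds,$$
and the bound $\|\frac{d}{ds}B(s)x\|_X\le c(s)\|x\|_Z$ yields $\|B(\eta)x\|\le\bigl(\|B(0)\|+\int_0^T c(s)\,ds\bigr)\|x\|_Z$. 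Consequently
$$\Bigl\|\int_0^\tau R(\tau-\eta)B(\eta)x\,d\eta\Bigr\|\le\bigl(\sup_{u\in[0,T]}\|R(u)\|\bigr)\,T\,\bigl(\|B(0)\|+\int_0^T c(s)\,ds\bigr)\|x\|_Z.$$

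Adding the two estimates gives $\|R'(\tau)x\|\le M\|x\|_Z$ for every $\tau\in[0,T]$, with $M$ precisely as in the statement, and therefore $\|R(t)x-R(s)x\|\le\int_s^t\|R'(\tau)x\|\,d\tau\le M|t-s|\|x\|_Z$; exchanging the roles of $t$ and $s$ covers the case $t<s$, which produces the absolute value. The only steps that require any care are the passage from the $W^{1,1}$ regularity in $(\mathcal{H}.2)$ to the absolute-continuity representation of $\eta\mapsto B(\eta)x$, and the verification that $\tau\mapsto R'(\tau)x$ is integrable on $[0,T]$ — the latter following from its continuity, which is again part of (iii). Neither is a genuine obstacle, so the argument is essentially a careful bookkeeping of constants.
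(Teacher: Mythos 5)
Your proposal is correct and follows essentially the same route as the paper: both bound $\|B(\eta)x\|$ via the $W^{1,1}$ hypothesis, use the representation $R'(\tau)x=R(\tau)Ax+\int_0^\tau R(\tau-\eta)B(\eta)x\,d\eta$ to get the uniform estimate $\|R'(\tau)x\|\le M\|x\|_Z$, and integrate. Your write-up is merely more explicit than the paper's about the absolute-continuity step and the final integration.
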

\begin{proof}
Let $t,s \in [0,T]$ and $x \in Z$. By assumption $(\mathcal{H}.2)$, we have
\begin{eqnarray*}
\|B(t)x\|_X &\leq & \|B(0)x\|+\int_0^tc(u)du\|x\|_Z\nonumber\\
 &\leq & \left(\|B(0)\|+\int_0^T c(u)du\right)\|x\|_Z
\end{eqnarray*}
and
 \begin{eqnarray*}
\|R'(t)x\|_X &\leq & \sup_{u\in [0,T]}\|R(u)\| \|Ax\|+\sup_{u\in [0,T]}\|R(u)\|\int_0^T\|B(u)x\|_Xdu\nonumber\\
 &\leq & \sup_{u\in [0,T]}\|R(u)\|\left(1+ T\|B(0)\|+T\int_0^T c(u)du \right)\|x\|_Z
\end{eqnarray*}
which entails that
$\|R(t)x-R(s)x\|\leq M \mid t-s\mid \parallel x\parallel_Z$.
\end{proof}
\section{Main Result}
The following part of this paper moves on to prove the controllability of the stochastic system (\ref{eq1}).
For this task we assume that the following conditions are in force.
\begin{itemize}
\item [$(\mathcal{H}.3)$]The function $f:[0,+\infty)\times X
\rightarrow X$ satisfies the following Lipschitz
conditions: that is, there exist positive constants
$ C_i:=C_i(T), i=1,2$ such that, for all $t\in [0,T]$ and
$x,y\in X $
\item[] \qquad $ \|f(t,x)-f(t,y)\|\leq C_1 \|x-y\| \,\,,\,
\,\,\, \|f(t,x)\|^2\leq C_2 (1+\|x\|^2).$
\item [$(\mathcal{H}.4)$]
The function $g:[0,+\infty)\times X
\rightarrow X$ satisfies the following conditions:
\begin{itemize}
 \item [(i)]
There exist constants $ C_i:=C_i(T), i=3, 4$ such
 that  for all  $t\in [0,T]$ and $x,y\in  X$
\item[] \qquad $ \|g(t,x)-g(t,y)\|\leq C_3 \|x-y\| \,\,,\,
\,\,\, \|g(t,x)\|^2\leq C_4 (1+\|x\|^2).$
\item [(ii)]The function $g$ is continuous
in the quadratic mean sense:
$$\forall x\in \mathcal{C}([0,T], \mathbb{L}^2(\Omega, X)),\;\;\lim_{t\rightarrow s}\mathbb{E}\|g(t,x(t))-g(s,x(s))\|^2=0.$$
\end{itemize}
\item [$(\mathcal{H}.5)$]The function $\sigma:[0,+\infty)\rightarrow \mathcal{L}_2^0(Y,X)$ satisfies the following conditions:
 \begin{itemize}
 \item [(i)] There exists a constant $C_5>0$ such that, for all  $t,s \in [0,T]$
 $$\|\sigma(t)-\sigma (s)\|_{\mathcal{L}_2^0}\leq C_5 \mid t-s\mid^{\gamma}, \;\;\; \mbox{ where}\;\; \gamma>1/2-H.$$
 \item [(ii)]  $\forall  t \in [0,T]; \forall y\in D(A)$, $ \sigma(t)y\in D(A)$.
  \item [(iii)] There exists a constant $C_6>0$ such that $\int_0^T\|A\sigma(t)\|^2_{\mathcal{L}_2^0}\;dt \leq C_6$
 \end{itemize}
 \item [$(\mathcal{H}.6)$] The linear operator $W$ from $L^2([0,T],U)$ into $X$ defined by:
 $$Wu=\int_0^T R(T-s)Lu(s)ds$$
 has an inverse operator $W^{-1}$ that takes values in $L^2([0,T],U)\backslash KerW$, where $kerW=\{x\in L^2([0,T],U),\  Wx=0\}$, and there exists finite positive constants $M_L$ and $M_w$ such that $\|L\|\le M_L$ and $\|W^{-1}\|\le M_w$.
 \end{itemize}
Moreover, we assume that $\varphi \in \mathcal{C}([-\tau,0],\mathbb{L}^2(\Omega,X))$. 
Similar to the deterministic situation we give the following
definition of mild solutions for equation (\ref{eq1}).
\begin{definition}
An $X$-valued  process $\{x(t),\;t\in[-\tau,T]\}$, is called a mild
solution of equation (\ref{eq1}) if
\begin{itemize}
\item[$i)$] $x(.)\in \mathcal{C}([-\tau,T],\mathbb{L}^2(\Omega,X))$,
\item[$ii)$] $x(t)=\varphi(t), \, -\tau \leq t \leq 0$.
\item[$iii)$]For arbitrary $t \in [0,T]$, we have
\begin{eqnarray*}
x(t)&=& R(t)(\varphi(0)+g(0,\varphi(-r(0))))-g(t,x(t-r(t)))\\
&+&\int_0^t R(t-s)[Lu(s)+f(s,x(s-\rho (s))]ds+\int_0^t R(t-s)\sigma(s)dB^H(s)\;\; \mathbb{P}-a.s.\phantom{\int_0^2+2}\\
\end{eqnarray*}
\end{itemize}
\end{definition}
The concept of controllability of neutral integro-differential stochastic functional differential equation is the following:
\begin{definition}
	The system $(\ref{eq1})$ is said to be controllable on the interval $[-\tau,T]$, if for every initial stochastic process $\varphi$ defined on $[-\tau,0]$ and $x_1\in X$, there exists a stochastic control $u\in L^2([0,T],U)$ such that the mild solution $x(.)$ of equation  $(\ref{eq1})$ satisfies $x(T)=x_1$. 
\end{definition}
The main result of this work is given in the next theorem.
\begin{theorem}\label{th1}
Suppose that $(\mathcal{H}.1)-(\mathcal{H}.6)$ hold. Then, the system
$(\ref{eq1})$ is controllable on $[-\tau,T]$ provide that 
$$C_3^2+D^2C_1^2T^2+D^2M_L^2M_W^2C_3^2 T+D^4M_L^2M_W^2C_1^2T^3 < \frac{1}{4}.$$

\end{theorem}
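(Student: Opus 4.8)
The plan is to apply the Banach fixed point theorem to an operator $\Phi$ defined on the space $\mathcal{C}([-\tau,T],\mathbb{L}^2(\Omega,X))$ equipped with the supremum norm $\|x\|_\infty^2 = \sup_{t\in[-\tau,T]}\mathbb{E}\|x(t)\|^2$. First I would construct, for an arbitrary $x\in\mathcal{C}([-\tau,T],\mathbb{L}^2(\Omega,X))$, the steering control
$$u_x(t) = W^{-1}\Big[x_1 - R(T)(\varphi(0)+g(0,\varphi(-r(0)))) + g(T,x(T-r(T))) - \int_0^T R(T-s)f(s,x(s-\rho(s)))ds - \int_0^T R(T-s)\sigma(s)dB^H(s)\Big],$$
which is well defined by hypothesis $(\mathcal{H}.6)$, and then set $(\Phi x)(t) = \varphi(t)$ on $[-\tau,0]$ and, for $t\in[0,T]$,
$$(\Phi x)(t) = R(t)(\varphi(0)+g(0,\varphi(-r(0)))) - g(t,x(t-r(t))) + \int_0^t R(t-s)[Lu_x(s)+f(s,x(s-\rho(s)))]ds + \int_0^t R(t-s)\sigma(s)dB^H(s).$$
By design, the choice of $u_x$ forces $(\Phi x)(T)=x_1$, so any fixed point is a mild solution realizing the target state, which is exactly controllability.

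Next I would verify that $\Phi$ maps $\mathcal{C}([-\tau,T],\mathbb{L}^2(\Omega,X))$ into itself. The only nonstandard point is the continuity and $\mathbb{L}^2$-boundedness of the stochastic convolution $\int_0^t R(t-s)\sigma(s)dB^H(s)$; here one uses the Wiener integral representation \eqref{int}, the bounds \eqref{K1}, \eqref{K2} on the kernel $K_H$ and its derivative, together with $(\mathcal{H}.5)(i)$ (the $\gamma$-Hölder condition with $\gamma>1/2-H$, which places $\sigma$ in $\mathcal{C}^\gamma\subseteq\mathcal{H}$) to estimate $\mathbb{E}\|\int_0^t R(t-s)\sigma(s)dB^H(s)\|^2$ by a constant; continuity in $t$ follows from the strong continuity of $R(\cdot)$ in $(\mathcal{H}.1)$/Definition and dominated convergence. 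The remaining terms are handled by the linear growth parts of $(\mathcal{H}.3)$, $(\mathcal{H}.4)$, the boundedness $\|R(t)\|\le N e^{\beta t}\le D:=\sup_{t\in[0,T]}\|R(t)\|$, and $(\mathcal{H}.4)(ii)$ for the quadratic-mean continuity of $t\mapsto g(t,x(t-r(t)))$.

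The heart of the argument is the contraction estimate. Given $x,y\in\mathcal{C}([-\tau,T],\mathbb{L}^2(\Omega,X))$, the stochastic convolution terms cancel, so
$$(\Phi x)(t)-(\Phi y)(t) = -[g(t,x(t-r(t)))-g(t,y(t-r(t)))] + \int_0^t R(t-s)L[u_x(s)-u_y(s)]ds + \int_0^t R(t-s)[f(s,x(s-\rho(s)))-f(s,y(s-\rho(s)))]ds.$$
Using the Lipschitz bounds in $(\mathcal{H}.3)$, $(\mathcal{H}.4)(i)$, the bound $D$ on $\|R(t)\|$, $\|L\|\le M_L$, $\|W^{-1}\|\le M_W$, and the elementary inequality $\mathbb{E}\|a+b+c\|^2\le 4(\mathbb{E}\|a\|^2+\cdots)$ together with the Cauchy--Schwarz inequality on the time integrals, one bounds $\mathbb{E}\|u_x(s)-u_y(s)\|^2$ by $2M_W^2(C_3^2\|x-y\|_\infty^2 + T C_1^2 \|x-y\|_\infty^2)$ and then assembles
$$\sup_{t\in[0,T]}\mathbb{E}\|(\Phi x)(t)-(\Phi y)(t)\|^2 \le \Big(C_3^2 + D^2 C_1^2 T^2 + D^2 M_L^2 M_W^2 C_3^2 T + D^4 M_L^2 M_W^2 C_1^2 T^3\Big)\|x-y\|_\infty^2,$$
(absorbing the factors of $4$ and $2$ into the constants by a careful accounting, which is what produces the threshold $1/4$). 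The hypothesis $C_3^2+D^2C_1^2T^2+D^2M_L^2M_W^2C_3^2T+D^4M_L^2M_W^2C_1^2T^3<\tfrac14$ makes $\Phi$ a contraction, and Banach's theorem yields a unique fixed point, completing the proof. The main obstacle I anticipate is the bookkeeping in the stochastic-convolution estimate — correctly invoking the isometry $K_{H,t}^*$ and the kernel bounds \eqref{K1}--\eqref{K2} to show the Wiener integral term is finite and $t$-continuous — together with tracking the numerical constants so that the stated $1/4$ bound comes out exactly.
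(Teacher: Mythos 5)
Your proposal follows essentially the same route as the paper: the same steering control built from $W^{-1}$, the same fixed-point operator on $\mathcal{C}([-\tau,T],\mathbb{L}^2(\Omega,X))$, the same technical treatment of the stochastic convolution via the kernel representation and bounds \eqref{K1}--\eqref{K2}, and the same four-term contraction estimate whose uniform factor of $4$ produces the threshold $1/4$. The only caveat is in the bookkeeping you flag yourself: to land exactly on $1/4$ you must split $\psi(x)-\psi(y)$ directly into the four differences (as the paper does) rather than nesting a separate factor for the control difference, which would inflate the constant.
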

\begin{proof}
	Throughout the proof we will use the following notations:
	$$D:=\displaystyle\sup_{t\in [0,T]}\|R(t)\|\,,\,\,\widetilde{\sigma}:=\displaystyle\sup_{t\in [0,T]}\|\sigma(t)\|_{{\mathcal{L}_2^0}}.$$
Fix $T>0$ and let  $\mathcal{B}_T := \mathcal{C}([-\tau, T], \mathbb{L}^2(\Omega, X))$ be the Banach space of all continuous functions from $[-\tau, T]$ into $\mathbb{L}^2(\Omega, X)$,  equipped  with the  supremum norm $\|\xi\|_{\mathcal{B}_T}=\displaystyle\sup_{u \in [-\tau,T]}\left(\mathbb{E} \|\xi (u)\|^2\right)^{1/2}$ and let us consider the set
 $$S_T=\{x\in \mathcal{B}_T : x(s)=\varphi(s),\; \mbox {for} \;\;s \in [-\tau,0] \}.$$
 $S_T$ is a closed subset of $\mathcal{B}_T$ provided with the norm  $\|.\|_{\mathcal{B}_T}$.
 Thanks to hypothesis $(\mathcal{H}.6)$, we can define the following control: 
 \begin{eqnarray}
 u(t)&=& W^{-1}\{x_1-R(T)(\varphi(0) +g(0,\varphi(-r(0))))-g(T,\varphi(T-r(T))) \nonumber\\
 &&- \int_0^TR(T-s)f(s,x(s-\rho(s)))ds-\int_0^TR(T-s)\sigma(s)dB^H(s)\}(t).\quad 
 \end{eqnarray} 
 We define the operator $\psi$ on $S_T$ by:
$$\psi(x)(t)=\varphi(t)\,,\,\,\,\, \forall t\in [-\tau,0]\,, $$
and for all $t\in [0,T]$
 \begin{eqnarray*}
\psi(x)(t)&=&R(t)(\varphi(0)+g(0,\varphi(-r(0))))-g(t,x(t-r(t)))\\
&&+\int_0^t R(t-s)[Lu(s)+f(s,x(s-\rho (s)))]ds+\int_0^tR(t-s)\sigma(s)dB^H(s)\,.
 \end{eqnarray*}
Then, the controllability of system (\ref{eq1}) is equivalent to find a fixed point for the operator $\psi$. Next we will show by using Banach fixed point theorem that $\psi$ has a unique fixed point. We divide the subsequent proof into two steps.\\
{\bf Step 1.} For arbitrary $x\in S_T$, let us prove that $t\rightarrow \psi(x)(t)$ is continuous on the interval $[0, T]$ in the $\mathbb{L}^2(\Omega,X)$-sense.\\
Let us consider  $0 <t<T$  and $h>0$ small enough. Then for any fixed $x\in S_{T}$, we have
\begin{eqnarray*}
\mathbb{E}\|\psi(x)(t+h)-\psi(x)(t)\|^2&\le & 5\mathbb{E}\|(R(t+h)-R(t))[\varphi(0)+g(0,\varphi(-r(0)))]\|^2\\
&+& 5\mathbb{E}\|g(t+h,x(t+h-r(t+h)))-g(t+h,x(t-r(t)))\|^2\\
&+& 5\mathbb{E}\|\int_0^{t+h}R(t+h-s)f(s,x(s-r(s)))ds-\int_0^tR(t-s)f(s,x(s-r(s)))ds\|^2\\
&+& 5 \mathbb{E}\|\int_0^{t+h}R(t+h-s)\sigma(s)dB^H(s)-\int_0^tR(t-s)\sigma(s)dB^H(s)\|^2\\
&+& 5 \mathbb{E}\|\int_0^{t+h}R(t+h-s)Lu(s)ds-\int_0^tR(t-s)Lu(s)ds\|^2\\
&=& \sum_{1\le i\le 5}5 J_i(h).
\end{eqnarray*}

The continuity of the terms $J_1$, $J_2$ and $J_3$ can be proved by similar arguments as those used to prove Theorem $3.3$ in \cite{diop}. Then, it suffices to show that $J_4$ and $J_5$ possess the desired regularity. For the sake of clarity of the paper, we restrict us to the continuity of $J_4$. For the term $J_5$ thanks to the boundedness of the operators $L$ and $W^{-1}$, the same calculus provide the regularity.
\begin{eqnarray*}
J_4 &=& \mathbb{E}\|\int_0^{t+h}R(t+h-s)\sigma(s)dB^H(s)-\int_0^tR(t-s)\sigma(s)dB^H(s)\|^2 \\
&\leq & 2\mathbb{E}\|\int_0^t (R(t+h-s)-R(t-s))\sigma(s)dB^H(s)\|^2+2\mathbb{E}\|\int_t^{t+h} R(t+h-s)\sigma(s)dB^H(s)\|^2\\
&\leq & J_{41}(h)+J_{42}(h).
\end{eqnarray*}
By $(\ref{int})$, we get that
\begin{eqnarray}\label{1j}
J_{41}(h)&= &2\sum_{n=1}^{\infty}\lambda_n\int_0^t \|K_t^*(R(t+h-s)-R(t-s))\sigma(s)e_n\|^2ds\nonumber\\
&\leq & 4 \sum_{n=1}^{\infty}\lambda_n\int_0^t K^2(t,s)\|(R(t+h-s)-R(t-s))\sigma(s)e_n\|^2ds\nonumber\\
&+& 8\sum_{n=1}^{\infty}\lambda_n\int_0^t\left\|\int_s^t\left(R(t+h-r)-R(t+h-s)+R(t-s)-R(t-r)\right)\sigma(r)e_n\frac{\partial K}{\partial r}(r,s)dr\right\|^2ds \nonumber\\
 &+&8\sum_{n=1}^{\infty}\lambda_n\int_0^t\left\|\int_s^t (R(t+h-s)-R(t-s))(\sigma(s)e_n -\sigma(r)e_n)\frac{\partial K}{\partial r}(r,s)dr\right\|^2ds\nonumber\\
&\leq & I_{1}+I_{2}+I_{3}.
\end{eqnarray}
We estimate the various terms of the right-hand side of $(\ref{1j})$ separately.
For the first term, we have: $I_{1}= \sum_{n=1}^{\infty}f_n(h)$ where
 $$f_n(h)=4 \lambda_n\int_0^t  K^2(t,s)\|(R(t+h-s)-R(t-s))\sigma(s)e_n\|^2ds.$$
By using the strong continuity of $R(t)x$, we get:
 $$\lim_{h\rightarrow 0}  K^2(t,s)\|(R(t+h-s)-R(t-s))\sigma(s)e_n\|^2=0,$$
 and since
 \begin{eqnarray*}
 &&\lambda_n K^2(t,s)\|(R(t+h-s)-R(t-s))\sigma(s)e_n\|^2\\
 &&\qquad\qquad \qquad\qquad  \leq  4 D^2 \widetilde{\sigma}^2 K^2(t,s)\in \mathbb{L}^1((0,t),ds),
 \end{eqnarray*}
 then, we conclude by the Lebesgue dominated theorem that $\lim_{h\rightarrow 0} f_n(h)=0.$
 Besides, we have:
  $$ |f_n(h) |\leq 16D^2 \lambda_n\int_0^t  K^2(t,s)\|\sigma(s)e_n\|^2ds, $$
and since $$\sum_{n=1}^{\infty}16D^2 \lambda_n\int_0^t  K^2(t,s)\|\sigma(s)e_n\|^2ds\leq16D^2\widetilde{\sigma}^2\int_0^t  K^2(t,s)ds <\infty.$$
Then, we conclude by the double limit theorem that
 \begin{equation}\label{j1}
 \lim_{h\rightarrow 0} I_1= \lim_{h\rightarrow 0} \sum_{n=1}^{\infty}f_n(h)=\sum_{n=1}^{\infty} \lim_{h\rightarrow 0}f_n(h)=0.
 \end{equation}
For the second term, we have: $I_2= \sum_{n=1}^{\infty}g_n(h)$ where $$g_n(h)=8\lambda_n\int_0^t\left(\int_s^t\|\left(R(t+h-r)-R(t+h-s)+R(t-s)-R(t-r)\right)\sigma(r)e_n\|\frac{\partial K}{\partial r}(r,s)dr\right)^2ds.$$
The strong continuity of $R(t)x$ provides:
 $$\lim_{h\rightarrow 0}\|\left(R(t+h-r)-R(t+h-s)+R(t-s)-R(t-r)\right)\sigma(r)e_n\|\frac{\partial K}{\partial r}(r,s)=0.$$
Using Lemma $\ref{lem1}$ together with inequality $(\ref{K2})$, we get
\begin{eqnarray*}
 &&\|\left(R(t+h-r)-R(t+h-s)+R(t-s)-R(t-r)\right)\sigma(r)e_n\||\frac{\partial K}{\partial r}(r,s)|\\
&&\qquad\qquad \qquad\qquad \leq 2M C_H(1/2 -H)\|\sigma(r)e_n\|_{Z}(r-s)^{H-1/2}\in \mathbb{L}^1((s,t),dr)
\end{eqnarray*}
then, we conclude anew by the dominated convergence theorem that
 $$\lim_{h\rightarrow 0}\int_s^t\|\left(R(t+h-r)-R(t+h-s)+R(t-s)-R(t-r)\right)\sigma(r)e_n\|\frac{\partial K}{\partial r}(r,s)dr=0.$$
 Furthermore, Lemma $\ref{lem1}$ and inequality $(\ref{K2})$ entail 
 \begin{eqnarray*}
 &&\left(\int_s^t\|\left(R(t+h-r)-R(t+h-s)+R(t-s)-R(t-r)\right)\sigma(r)e_n\|\frac{\partial K}{\partial r}(r,s)dr\right)^2\\
 && \qquad\qquad \qquad\qquad \leq \frac{2M^2C_H^2(1/2-H)^2}{H}(t-s)^{2H}\int_0^t\|\sigma(r)e_n\|^2_{Z}dr \, \in \mathbb{L}^1((0,t), ds).
 \end{eqnarray*}
 Then we conclude by the Lebesgue dominated theorem that $ \lim_{h\rightarrow 0}g_n(h)=0.$\\
On account of: $$g_n(h)\leq  \frac{16M^2C_H^2(1/2-H)^2}{H(2H+1)}t^{2H+1}\int_0^t\lambda_n\|\sigma(r)e_n\|^2_{Z}dr,$$
and
$$\sum_{n=1}^{\infty}\int_0^t\lambda_n \|\sigma(r)e_n\|^2_{D(A)}dr\leq 2 T\widetilde{\sigma}^2 + 2 \int_0^t\|A\sigma(r)\|^2_{\mathcal{L}_2^0}\,dr\, < \infty,$$
we conclude by the double limit theorem that
\begin{equation}\label{j2}
\lim_{h\rightarrow 0} I_{2}= \lim_{h\rightarrow 0} \sum_{n=1}^{\infty}g_n(h)=\sum_{n=1}^{\infty} \lim_{h\rightarrow 0}g_n(h)=0.
\end{equation}
Similar computations can be used to estimate the term $ I_{3}$, indeed, we have: $ I_{3}=\sum_{n=1}^{\infty}l_n(h)$, where
$$l_n(h)=8\lambda_n\int_0^t\left(\int_s^t \|(R(t+h-s)-R(t-s))(\sigma(s)e_n -\sigma(r)e_n)\|\frac{\partial K}{\partial r}(r,s)dr\right)^2ds.$$
Again, the strong continuity of $R(t)x$ gives us:
 $$\lim_{h\rightarrow 0} \|(R(t+h-s)-R(t-s))(\sigma(s)e_n -\sigma(r)e_n)\|\frac{\partial K}{\partial r}(r,s)=0.$$
By assumption $(\mathcal{H}.5)$ and inequality $(\ref{K2})$, we have
 $$ \|(R(t+h-s)-R(t-s))(\sigma(s)e_n -\sigma(r)e_n)\|\frac{\partial K}{\partial r}(r,s)\leq \frac{2DC_5C_H(1/2-H)}{\sqrt{\lambda_n}}(r-s)^{\gamma+ H-3/2}\in \mathbb{L}^1((s,t),dr)$$
  Once more, we conclude by the Lebesgue dominated theorem that:
  $$\lim_{h\rightarrow 0}\int_s^t \|(R(t+h-s)-R(t-s))\sigma(s)e_n -\sigma(r)e_n\|\frac{\partial K}{\partial r}(r,s)dr=0.$$
  On the other hand, we have
   \begin{eqnarray*}
  &&\left(\int_s^t \|(R(t+h-s)-R(t-s))(\sigma(s)e_n -\sigma(r)e_n)\|\frac{\partial K}{\partial r}(r,s)dr\right)^2\\
  &&\qquad\qquad \qquad\qquad \leq \frac{4D^2C_5^2C_H^2(1/2-H)^2}{\lambda_n(\gamma+H-1/2)^2}(t-s)^{2\gamma+2H-1}\in \mathbb{L}^1((0,t),ds).
 \end{eqnarray*}
 One more time, the Lebesgue dominated theorem gives:
 \begin{equation}\label{z1}
 \lim_{h\rightarrow 0}l_n(h)=0.
\end{equation}
In view of $(\ref{K2})$ we have 
\begin{equation}\label{z2}
l_n(h)\leq 32\lambda_nD^2C_H^2(1/2-H)^2\int_0^t\left(\int_s^t \|\sigma(s)e_n -\sigma(r)e_n\|(r-s)^{H-3/2}dr\right)^2ds.
\end{equation}
Now, let $\alpha \in (1, \gamma+H+1/2)$. By H\"{o}lder's inequality and  assumption $(\mathcal{H}.5)$, we get
\begin{eqnarray}\label{z3}
&&\sum_{n=1}^{\infty}\lambda_n\int_0^t\left(\int_s^t \|\sigma(s)e_n -\sigma(r)e_n\|(r-s)^{H-3/2}dr\right)^2ds\nonumber\\
&&\qquad\qquad \leq \int_0^t\left(\int_s^t(t-s)^{-3+2\alpha}dr\int_s^t \|\sigma(s) -\sigma(r)\|_{\mathcal{L}_2^0}^2(r-s)^{2H-2\alpha}dr\right)ds\nonumber\\
&&\qquad\qquad \leq    \frac{C_5^2}{(2\alpha-2)(2H+2\gamma-2\alpha+1)}\int_0^t(t-s)^{2\gamma+2H-1}\,ds<\infty
\end{eqnarray}
Combining inequalities $(\ref{z1})$, $(\ref{z2})$, $(\ref{z3})$ and the double limit theorem, we get that
\begin{equation}\label{j3}
\lim_{h\rightarrow 0} I_{3}= \lim_{h\rightarrow 0} \sum_{n=1}^{\infty}l_n(h)=\sum_{n=1}^{\infty} \lim_{h\rightarrow 0}l_n(h)=0.
\end{equation}
Inequalities $(\ref{j1})$, $(\ref{j2})$ and  $(\ref{j3})$ imply that $\displaystyle\lim_{h\rightarrow 0}J_{41}(h)=0$.

By the same token, we have
\begin{eqnarray*}
J_{42}(h)&= &2 \sum_{n=1}^{\infty}\lambda_n\int_t^{t+h} \|K_{t+h}^*(R(t+h-s)\sigma(s)e_n\|^2ds\\
&\leq & 4 \sum_{n=1}^{\infty}\lambda_n\int_t^{t+h} K^2(t+h,s)\|R(t+h-s)\sigma(s)e_n\|^2ds\\
&&+ 8\sum_{n=1}^{\infty}\lambda_n\int_t^{t+h}\left\|\int_s^{t+h}(R(t+h-r)-R(t+h-s))\sigma(r)e_n\frac{\partial K}{\partial r}(r,s)dr\right\|^2ds \\ &&+8\sum_{n=1}^{\infty}\lambda_n\int_t^{t+h}\left\|\int_s^t R(t+h-s)(\sigma(r)e_n -\sigma(s)e_n)\frac{\partial K}{\partial r}(r,s)dr\right\|^2ds.\\
&\leq & I'_{1}+I'_{2}+I'_{3}\,.
\end{eqnarray*}
By means of $(\ref{K1})$, we get
\begin{eqnarray}\label{l1}
 I'_{1}&\leq & 16D^2c_H^2 \sum_{n=1}^{\infty}\lambda_n\int_t^{t+h}\left((t+h-s)^{2H-1}+s^{2H-1}\right)\|\sigma(s)e_n \|^2ds\nonumber\\
&\leq &  \frac{8c_H^2D^2\widetilde{\sigma}^2}{H}
\left(h^{2H}+(t+h)^{2H}-t^{2H}\right).
\end{eqnarray}
Using H\"{o}lder's inequality, Lemma $\ref{lem1}$ together with inequality $(\ref{K2})$, we get
\begin{eqnarray}\label{l2}
& I'_{2}&\nonumber\\
&\leq&  8M^2c_H^2(1/2-H)^2 \sum_{n=1}^{\infty}\lambda_n
\int_t^{t+h}\left(\int_s^{t+h} (r-s)^{H-1/2}\|\sigma(r)e_n\|_{Z}dr\right)^2ds\nonumber\\
&\leq &  8M^2 c_H^2(1/2-H)^2 \sum_{n=1}^{\infty}\lambda_n\int_t^{t+h}\left(\int_s^{t+h}(r-s)^{2H-1}dr \int_s^{t+h} \|\sigma(r)e_n\|^2_{Z}dr\right)ds\nonumber\\
&\leq &  16M^2c_H^2(1/2-H)^2 \int_t^{t+h}\left(\frac{1}{2H}(t+h-s)^{2H} \int_s^{t+h} \left(\|A\sigma(r)\|_{\mathcal{L}_2^0}^2+\|\sigma(r)\|_{\mathcal{L}_2^0}^2\right)dr\right)ds\nonumber\\
&\leq & \frac{8M^2 c_H^2(1/2-H)^2h^{2H+1}}{H(2H+1)}\int_0^{T} \left(\|A\sigma(r)\|_{\mathcal{L}_2^0}^2+\|\sigma(r)\|_{\mathcal{L}_2^0}^2\right)dr.
\end{eqnarray}
Inequality $(\ref{K2})$, condition  $(\mathcal{H}.5)$ and H\"{o}lder's  inequality give
\begin{eqnarray}\label{l3}
& I'_{3}&\nonumber\\
&\leq & \delta \sum_{n=1}^{\infty}\lambda_n\int_t^{t+h}\left(\int_s^{t+h} \|\sigma(r)e_n-\sigma(s)e_n\|(r-s)^{H-3/2}dr\right)^2ds\nonumber\\
&\leq & \delta \,C_5^2 \sum_{n=1}^{\infty}\lambda_n\int_t^{t+h}
\left(\int_s^{t+h} (r-s)^{H-3/2+\gamma}dr \int_s^{t+h}
\|\sigma(r)e_n-\sigma(s)e_n\|^2(r-s)^{H-3/2-\gamma}dr\right)ds\nonumber\\
&\leq & \delta \,C_5^2 \int_t^{t+h}\left(\int_s^{t+h} (r-s)^{H-3/2+\gamma}dr\right)^2ds\nonumber\\
&\leq & \frac{\delta\, C_5^2}{2(H+\gamma)(H+\gamma-1/2)^2}h^{2(H+\gamma)}.
\end{eqnarray}
where  $\delta=8D^2 c_H^2(1/2-H)^2$.

 Inequalities $(\ref{l1})$, $(\ref{l2})$ and $(\ref{l3})$ imply that $\displaystyle\lim_{h\rightarrow 0}J_{42}(h)=0$.
Thus, we conclude that  the function  $t \rightarrow \psi(x)(t)$
is continuous on $[0,T]$ in the $\mathbb{L}^2$-sense.

{\bf Step 2.}
Now, we are going to show that $\psi$ is a contraction mapping in $S_{T}$. Let $x,y\in S_T$, we obtain for any fixed $t\in [0,T]$
\begin{eqnarray*}
\|\psi(x)(t)-\psi(y)(t)\|^2&\leq& 4\|g(t,x(t-r(t)))-g(t,y(t-r(t)))\|^2\\
&+&4\|\int_0^tR(t-s)(f(s,x(s-\rho(s)))-f(s,y(s-\rho(s)))ds\|^2\\
&+&4\|\int_0^tR(t-v)LW^{-1}\left\lbrace g(T,x(T-r(T)))-g(T,y(T-r(T)))\right\rbrace (v)dv\|^2\\
&+& 4\|\int_0^tR(t-v)LW^{-1}\left\lbrace \int_0^T R(T-s)[f(s,x(s-\rho(s)))-f(s,y(s-\rho(s)))]ds\right\rbrace (v)dv\|^2
\end{eqnarray*}
By virtue of the boundedness of the operators $L$ and $W$, and Lipschitz property of $g$ and $f$ combined with H\"older's inequality, we obtain for all $t\in [0,T]$:
\begin{eqnarray*}
\mathbb{E}\|\psi(x)(t)-\psi(y)(t)\|^2 &\leq & 4C_3^2\mathbb{E}\|x(t-r(t))-y(t-r(t))\|^2\\
&+& 4tD^2C_1^2\int_0^t\mathbb{E}\|x(s-\rho(s))-y(s-\rho(s))\|^2ds\\
&+& 4tD^2M_L^2M_W^2C_3^2 \mathbb{E}\|x(T-r(T))-y(T-r(T))\|^2\\
&+& 4tD^4M_L^2M_W^2C_1^2T \int_0^T\mathbb{E}\|x(s-\rho(s))-y(s-\rho(s))\|^2 ds.
\end{eqnarray*}
Consequently, $$\sup_{s\in[-\tau,T]}\mathbb{E}\|\psi(x)(t)-\psi(y)(t)\|^2\leq
K\sup_{s\in[-\tau,T]} \mathbb{E}\|x(s)-y(s)\|^2,$$ where
$$ K=4[C_3^2+D^2C_1^2T^2+D^2M_L^2M_W^2C_3^2 T+D^4M_L^2M_W^2C_1^2T^3].$$
Hence $\psi$ is a contraction mapping on $S_{T}$ and therefore has a unique fixed
point, which is a mild solution of equation $(\ref{eq1})$ on $[-\tau,T]$. Clearly, $\psi(x)(T)=x_1$ which implies that the system $(\ref{eq1})$ is controllable on $[-\tau,T]$.
This completes the proof.
\end{proof}
\section{Example}
By way of illustration, we consider the following stochastic integro-differential equation with finite delays $\tau_1$ and $\tau_2$, $0\le \tau_1,\tau_2<\infty$, of the form:
\begin{eqnarray}\label{exp}
	\left\{
	\begin{array}{r c l}
		\frac{\partial}{\partial t}[x(t,\xi)+\hat{g}(t,x(t-\tau_1,\xi))] =\frac{\partial^2}{\partial^2\xi}[x(t,\xi)+\hat{g}(t,x(t-\tau_1,\xi))]&& \\
		+\int_0^tb(t-s)\frac{\partial^2}{\partial^2\xi}[x(s,\xi)+\hat{g}(s,x(s-\tau_1,\xi))]ds&& \\
		+\hat{f}(t,x(t-\tau_2,\xi))+\mu(t,\xi)+\sigma(t)\frac{dB^H}{dt}(t),\ t\ge 0&&\\
		x(t,0)+g(t,x(t-\tau_1,0))=0,\ t\ge 0,&&\\
		x(t,\pi)+g(t,x(t-\tau_1,\pi))=0,\ t\ge 0,&&\\
		x(s,\xi)=\varphi(s,\xi),\ -\tau\le s\le 0\  a.s.
	\end{array}
	\right.
\end{eqnarray}
where $B^H$ denotes a fractional Brownian motion, $\hat{f}$, $\hat{g}: \mathbb{R}_+\times\mathbb{R}\to \mathbb{R}$ are continuous functions and $b:\mathbb{R}_+\to \mathbb{R}$ is continuous function and $\varphi:[-\tau,0]\times[0,\pi]\to \mathbb{R}$ is a given continuous function such that $\varphi(s,.)\in L^2([0,\pi])$ is measurable and satisfies $\mathbb{E}\|\varphi\|^2<\infty$.\\
Let $X=Y=L^2([0,\pi])$. Define the operator $A:D(A)\subset X\to X$ given by $A=\frac{\partial^2}{\partial^2\xi}$ with domain:
$$D(A)=\{x\in X: x"\in X,\ x(0)=x(\pi)=0\},$$
Then,   
$$Ax=\sum_{n=1}^{\infty}n^2<x,e_n>_X e_n,\quad  x\in D(A)$$
where $e_n:=\sqrt{\frac{2}{\pi}}\sin nx$, $n=1,2,...$ is an orthogonal set of eigenvector of $-A$.\\
It is known that $A$ is the infinitesimal generator of a strongly continuous semigroup of bounded linear operators $(T(t))_{t\ge 0}$ on $X$, which is given by
$$T(t)x=\sum_{n=1}^{\infty}n^2<x,e_n>e_n.$$
Furthermore, $\|T(t)\|\le e^{-\pi^2t}$ for every $t\ge 0$. \\
Let $B:D(A)\subset X\to X$ be the operator given by $B(t)x=b(t)Ax$, for $t\ge 0$ and $x\in D(A)$. 
Define the operator $W:L^2([0,T],U)\to X$ by:
$$Wu(\xi)=\int_0^TR(T-s)\mu(t,\xi)ds,\quad 0\le \xi\le \pi,$$
$W$ is a bounded linear operator but not necessarily one-to-one. Let
$$KerW=\{x\in L^2([0,T],U),\ Wx=0\}$$ 
be the null space of $W$ and $[KerW]^\perp$ be its orthogonal complement in $L^2([0,T],U)$. Let $\tilde{W}:[KerW]^\perp\to Range(W)$ be the restriction of $W$ to $[KerW]^\perp$, $\tilde{W}$ is one-to-one operator. The inverse mapping theorem says that $\tilde{W}^{-1}$ is bounded since $[KerW]^\perp$ and $Range(W)$ are Banach spaces. So that $W^{-1}$ is bounded and takes values in $L^2([0,T],U) \backslash  KerW$, hence assumption $(\mathcal{H}.6)$ is satisfied.
We suppose that:
\begin{itemize}
	\item[(i)] The operator $Lu:[0,T]\to X$, defined by: 
	$$Lu(t)(\xi)=\mu(t,\xi),\quad \xi\in [0,\pi],\quad u\in L^2([0,T],U).$$
	\item[(ii)]For $t\in [0,T]$, $\hat{f}(t,0)=\hat{g}(t,0)=0$,
	\item[(iii)]There exist positive constants $C_1$ and $C_3$, such that
	$$|\hat{f}(t,\xi_1)-\hat{f}(t,\xi_2)|\le C_1|\xi_1-\xi_2|,\ for t\in [0,T]\ and\ \xi_1, \xi_2 \in \mathbb{R},$$
	$$|\hat{g}(t,\xi_1)-\hat{g}(t,\xi_2)|\le C_1|\xi_1-\xi_2|,\ for t\in [0,T]\ and\ \xi_1, \xi_2 \in \mathbb{R}.$$ 
	\item[(iv)] There exist positive constants $C_2$ and $C_4$, such that
	$$|\hat{f}(t,\xi)|\le C_2(1+|\xi|^2),\ for\ t\in [0,T]\ and\ \xi \in \mathbb{R},$$
	$$|\hat{g}(t,\xi)|\le C_4(1+|\xi|^2),\ for\ t\in [0,T]\ and\ \xi \in \mathbb{R}.$$
	\item[(v)]The function $\sigma:[0,+\infty)\to \mathcal{L}^2_0\big(L^2([0,\pi],L^2([0,\pi]))\big)$ satisfies assumptions $(\mathcal{H}.6)$. 
\end{itemize}   
Define the operators $f,g:\mathbb{R}_+\times L^2([0,\pi])\to L^2([0,\pi])$ by
$$f(t,\phi)(\xi)=\hat{f}(t,\phi(-\tau_1)(\xi))\ for\ \xi \in [0,\pi]\ and\  \phi \in L^2([0,\pi]),$$
and
$$g(t,\phi)(\xi)=\hat{g}(t,\phi(-\tau_1)(\xi))\ for\ \xi \in [0,\pi]\ and\  \phi \in L^2([0,\pi]).$$
If we put:
\[
\left\{
\begin{array}{r c l}
x(t)(\xi)&=& x(t,\xi), \ for\ t\in [0,T]\ and\ \xi \in [0,\pi],\\
x(t)(\xi)&=& \varphi(t,\xi), , \ for\ t\in [-\tau,0]\ and\ \xi \in [0,\pi].
\end{array}
\right.
\]
Then, equation $(\ref{exp})$ takes the following abstract form:
\begin{eqnarray*}
	\left\{
	\begin{array}{r c l}
		&& d[x(t)+g(t,x(t-r(t)))]=\big[A[x(t)+g(t,x(t-r(t)))]+Lu(t)\big]dt\nonumber\\
		&&\qquad\qquad +\left[\int_0^tB(t -s)\left[x(s)+g(s,x(s-r(s)))\right]ds+ f(t,x(t-\rho(t)))\right]dt\nonumber\\
		&&\qquad \qquad+\sigma(t)dB^H(t),\qquad\qquad\qquad\qquad\qquad\qquad\;0\leq t \leq T,\nonumber\\
		&& x(t)=\varphi (t), \; -\tau \leq t \leq 0\,,
	\end{array}
	\right.
\end{eqnarray*}
Moreover, if $b$ is bounded and $\mathcal{C}^1$ such that $b'$ is bounded and uniformly continuous, then $(\mathcal{H}.2)$ is satisfied, hence equation $(\ref{exp})$ has a resolvent operator $(R(t))_{t\ge0}$ on $X$. Besides, the continuity of $\hat{f}$ and $\hat{g}$ and assumption $(ii)$ it ensues that $f$ and $g$ are continuous. In accordance with assumption $(iv)$ we obtain
$$\|f(t,\phi_1)-f(t,\phi_2)\|_{L^2([0,\pi])}\le C_1\|\phi_1-\phi_2\|_{L^2([0,\pi])},$$ 
$$\|g(t,\phi_1)-g(t,\phi_2)\|_{L^2([0,\pi])}\le C_3\|\phi_1-\phi_2\|_{L^2([0,\pi])}$$
Furthermore, by assumption $(iv)$, it follows that 
$$\|f(t,\phi)\|_{L^2([0,\pi])}\le C_2(1+\|\phi\|^2), and \quad \|f(t,\phi)\|_{L^2([0,\pi])}\le C_4(1+\|\phi\|^2).$$
Moreover, it is possible to choose the constants in such way  that:
$$4[C_3^2+D^2C_1^2T^2+D^2M_L^2M_W^2C_3^2 T+D^4M_L^2M_W^2C_1^2T^3]<1.$$

Thus, all the assumptions of Theorem $(\ref{th1})$ are fulfilled. Consequently, the system $(\ref{exp})$ is controllable on $[-\tau,T]$.
\section{References}

\end{document}